\begin{document}

\newtheorem{theorem}{Theorem}[section]
\newtheorem{result}[theorem]{Result}
\newtheorem{fact}[theorem]{Fact}
\newtheorem{conjecture}[theorem]{Conjecture}
\newtheorem{lemma}[theorem]{Lemma}
\newtheorem{proposition}[theorem]{Proposition}
\newtheorem{corollary}[theorem]{Corollary}
\newtheorem{facts}[theorem]{Facts}
\newtheorem{props}[theorem]{Properties}
\newtheorem*{thmA}{Theorem A}
\newtheorem{ex}[theorem]{Example}
\theoremstyle{definition}
\newtheorem{definition}[theorem]{Definition}
\newtheorem{remark}[theorem]{Remark}
\newtheorem{example}[theorem]{Example}
\newtheorem*{defna}{Definition}

\newcommand{\notes} {\noindent \textbf{Notes.  }}
\newcommand{\note} {\noindent \textbf{Note.  }}
\newcommand{\defn} {\noindent \textbf{Definition.  }}
\newcommand{\defns} {\noindent \textbf{Definitions.  }}
\newcommand{\x}{{\bf x}}
\newcommand{\z}{{\bf z}}
\newcommand{\B}{{\bf b}}
\newcommand{\V}{{\bf v}}
\newcommand{\T}{\mathbb{T}}
\newcommand{\Z}{\mathbb{Z}}
\newcommand{\Hp}{\mathbb{H}}
\newcommand{\D}{\mathbb{D}}
\newcommand{\R}{\mathbb{R}}
\newcommand{\N}{\mathbb{N}}
\renewcommand{\B}{\mathbb{B}}
\newcommand{\C}{\mathbb{C}}
\newcommand{\ft}{\widetilde{f}}
\newcommand{\dt}{{\mathrm{det }\;}}
 \newcommand{\adj}{{\mathrm{adj}\;}}
 \newcommand{\0}{{\bf O}}
 \newcommand{\av}{\arrowvert}
 \newcommand{\zbar}{\overline{z}}
 \newcommand{\xbar}{\overline{X}}
 \newcommand{\htt}{\widetilde{h}}
\newcommand{\ty}{\mathcal{T}}
\renewcommand\Re{\operatorname{Re}}
\renewcommand\Im{\operatorname{Im}}
\newcommand{\tr}{\operatorname{Tr}}

\newcommand{\ds}{\displaystyle}
\numberwithin{equation}{section}

\renewcommand{\theenumi}{(\roman{enumi})}
\renewcommand{\labelenumi}{\theenumi}

\title{On the infinitesimal space of UQR mappings}

\author{Alastair Fletcher}
\address{Department of Mathematical Sciences, Northern Illinois University, DeKalb, IL 60115-2888. USA}
\email{fletcher@math.niu.edu}
\author{Doug Macclure}
\email{dmacclure1@niu.edu}
\author{James Waterman}
\email{watermanjamesa@gmail.com}
\author{Sarah Wesley}
\email{swesley@niu.edu}

\thanks{This work was supported by a grant from the Simons Foundation (\#352034, Alastair Fletcher). This work arose from a project given by the first author at a graduate level course at Northern Illinois University.}

\maketitle

\begin{abstract}
Generalized derivatives and infinitesimal spaces generalize the idea of derivatives to mappings which need not be differentiable. It is particularly powerful in the context of quasiregular mappings, where normal family arguments imply generalized derivatives always exist. The main result of this paper is to show that if $f$ is any uniformly quasiregular mapping with $x_0$ a topologically attracting or repelling fixed point, at which $f$ is locally injective, then $f$ may be conjugated to a uniformly quasiregular mapping $g$ with fixed point $0$ and so that the infinitesimal space of $g$ at $0$ contains uncountably many elements. This should be contrasted with the fact that $f$ (and also $g$) is conjugate to $x\mapsto x/2$ or $x\mapsto 2x$ in the attracting or repelling cases respectively.
\end{abstract}

\section{Introduction}

\subsection{Motivation}

The notion of the derivative has driven the development of modern mathematics since Newton and Leibniz first developed it.  It has led to countless applications in various fields of mathematics and physics. Unfortunately, not all functions are differentiable.  In fact, one can argue that most functions are not differentiable, let alone differentiable everywhere on the function's domain.  Points where a function is not differentiable cannot always be ignored, particularly when one is concerned with the dynamics of a function near a fixed point.  With this, we are given impetus to further study the theory of infinitesimal spaces, constructed for quasiregular mappings by Gutlyanski, Martio, Ryazunov, and Vuorinen in \cite{GMRV}.  

Consider a function $f: \R \to \R$. The definition of $f$ being differentiable at $x_0\in \R$ implies that $f$ is well-approximated near $x_0$ by the map 
\[x \mapsto f(x_0) + f'(x_0)(x-x_0)\]
on a neighborhood of $x_0$.  
Indeed, for a differentiable function $f: \R^m \to \R^n$, differentiability at $x_0 \in \R^m$ implies that $f$ is well-approximated by the map 
\[x \mapsto f(x_0) + f'(x_0)(x-x_0)\]
on a neighborhood of $x_0$, where $f'(x_0)$ is the derivative matrix of $f$.  

Viewing the derivative of a holomorphic map $f:\C \to \C$ at $z_0$ as a linear map $\R^2 \to \R^2$, we obtain a more specific form than an arbitrary linear map.  Such derivatives are $2\times 2$ matrices of the form
\[
\left (
\begin{array}{ll}
a & b \\
-b & a 
\end{array}
\right )
= 
\left (
\begin{array}{ll}
|f'(z_0)| & 0 \\
0 & |f'(z_0)| 
\end{array}
\right ) \circ
\left (
\begin{array}{ll}
\cos \arg f'(z_0) & -\sin \arg f'(z_0) \\
\sin \arg f'(z_0) & \cos \arg f'(z_0)
\end{array}
\right )
\]
since the Cauchy-Riemann equations are satisfied.  Thus, the resulting approximating linear map can be written as a composition of a scaling and a rotation, which maps infinitesimal circles to infinitesimal circles.  

Now, for complex-valued functions, if we allow arbitrary non-singular linear maps as derivatives, the derivative maps circles to ellipses of uniformly bounded eccentricity.  Such maps are basic examples of quasiconformal maps.  See for example \cite{Alf,FM} for a more detailed development of the theory of quasiconformal maps.  

\subsection{Generalized Derivatives}

Now, we wish to show how we can generalize differentiability for maps that aren't differentiable.  Gutlyanski et al \cite{GMRV} define the notion of the generalized derivative at $x_0 \in \R^d$ as locally uniformly convergent limits of the family 
\[ \mathcal{F}_{x_0} =\left \{F_\delta(x) = \frac{f(\delta x + x_0) - f(x_0)}{\rho_f(\delta)}: \delta > 0\right \},\]
where 
\[\rho_f(\delta) = \left ( \frac{\mu [f(B(x_0,\delta))]}{\mu [B(0,1)]} \right )^{1/d}\]
is the mean radius of the unit ball under $f$ and $\mu$ is the standard Lebesgue measure.  The infinitesimal space of a map $f$ at $x_0$, denoted by $T(x_0, f)$, consists of limits of subsequences from $\mathcal{F}_{x_0}$. For quasiregular mappings (see below for the definition), normal family arguments imply $T(x_0,f)$ is always non-empty.

\begin{example}
To illustrate generalized derivatives, consider the function $f: \R \to \R$ defined by
\[ f(x) = 
\left \{ 
\begin{array}{ll}
x, & x \geq 0 \\
x/2, & x < 0.
\end{array}
\right .
\]
If we zoom in at the origin, the geometry of the graph remains the same.  So, no matter how far we zoom in, $f$ will always appear to be a piecewise linear function.  Now, pick $\delta > 0$.  Then the map $f(\delta x)$ maps the unit ball of $\R$, that is $(-1,1)$, onto the open interval $(-\delta/2, \delta)$.  We then have
\[\rho(\delta) = \frac{\delta + \delta/2}{2} = \frac{3\delta}{4}\]
and 
\[ g_\delta(x) = \frac{f(\delta x)}{\rho(\delta)} = 
\left \{ 
\begin{array}{ll}
4x/3, & x \geq 0 \\
2x/3, & x < 0,
\end{array}
\right .\]
which tells us the "shape" of $f$ on a scale of $\delta$, but not how much $f$ shrinks or enlarges.
Now, consider limits $g_{\delta_k}(x)$ for a sequence $\delta_k > 0$, $\delta_k \to 0$.  
Since $g_{\delta}$ is independent of $\delta$, there exists only one limit map, $g = g_\delta$. Hence the infinitesimal space contains only one mapping.  
\end{example}

\begin{definition}
If $T(x_0,f)$ consists of only one mapping, then $f$ is called simple at $x_0$.
\end{definition}

We remark that this terminology is slightly different to that used in \cite{GMRV}, where the infinitesimal space itself was called simple. Roughly speaking, if $f$ is simple at $x_0$, then $f$ is well-behaved near $x_0$. We first recall some notation before expanding on this idea.

Throughout, we will be using the equivalence relation $\sim$ as in \cite{GMRV}. If $U \subset \R^d$ is a neighborhood of $0$ and
$v,w: U \to \R^d$ are mappings, then
\[ v(x) \sim u(x) \]
as $x\to 0$ if
\[ ||v(x) - u(x)|| = o(||v(x)|| + ||u(x)||),\]
where $\alpha = o(\beta)$ means that given $\epsilon >0$, there is a neighborhood $V$ of $0$ such that $|\alpha(x)| \leq \epsilon |\beta(x)|$ for $x\in V$.  Here, we are using the standard Euclidean norm $|\cdot|$. 

\begin{remark}
With the notation as above,
\begin{enumerate}[(i)]
\item $g_\delta$ always preserves the measure of the unit ball, so any $\psi \in T(x_0,f)$ does too.  
\item If $f$ is differentiable at $x_0$, $T(x_0,f)$ consists only of a scaled version of the derivative, with the scaling so that the measure of the unit ball is preserved. In this case, $f$ is simple at $x_0$.
\item  If $f$ is simple at $x_0$ with $T(x_0,f) = \{ \psi \}$, then 
\begin{enumerate}
\item by \cite[Proposition 4.7]{GMRV}, \lefteqn{ f(x) \sim \rho(|x|)\psi(x/|x|),}
\item by \cite[Theorem 4.1]{GMRV}, for all $t > 0$, $x \in \R^d\setminus \{0\}$, there exists $D > 0$ such that $\psi(tx) = t^D\psi(x)$.  In other words, $\psi$ is $D$-positive homogeneous.
\end{enumerate}
\end{enumerate}
\end{remark}

Note, the latter part of the remark implies that near a simple point $x_0$ a quasiregular map maps any ball of sufficiently small radius centered at $x_0$ to a topological surface which can be radially shrunk or expanded to the topological surface $\psi(S^{d-1})$.  This is because asymptotically, $f$ can be expressed as a radial contraction of $\psi(S^{d-1})$.  

\subsection{Generalized Derivatives of Quasiregular Mappings}

Next, we wish to apply the concept of generalized derivatives to quasiconformal and quasiregular maps (see \cite{Rickman} for a detailed monograph on quasiregular mappings). Quasiregular mappings are Sobolev mappings in $W^1_{d,loc}(\mathbb{R}^d)$ where there is a uniform bound on distortion. More precisely, a mapping $f: E \rightarrow {\R}^d$ defined on a domain $E \subset {\R}^d$ is quasiregular if $f$ belongs to the above Sobolev space and there exists $K \in [1,\infty)$ such that 
\begin{equation} 
\label{eq:1}
|f'(x)|^d \leq KJ_f(x) 
\end{equation}
almost everywhere.  Here, $J_f(x)$ denotes the Jacobian determinant of $f$ at $x \in E$.  The smallest constant $K \geq 1$ for which \eqref{eq:1} holds is called the outer distortion $K_O(f)$.  If $f$ is quasiregular, then we also have 
\begin{equation}
\label{eq:2}
J_f(x) \leq K' \inf_{|h| = 1}|f'(x)h|^d
\end{equation}
almost everywhere in $E$ for some $K' \in [1,\infty).$  The smallest constant $K' \geq 1$ for which \eqref{eq:2} holds is called the inner distortion $K_I(f).$  The maximal distortion $K = K(f)$ of $f$ is the larger of $K_O(f)$ and $K_I(f)$, and we then say that $f$ is $K$-quasiregular. Quasiconformal mappings are injective quasiregular mappings.

Given a quasiregular map $f : E \to \R^d$ and $x \in E$, for $r$ such that $0 < r < d(x,\partial E)$ define
\[L(r) = \max_{|x-x_0| = r}\{|f(x) - f(x_0)|\}, \]
\[ l(r) = \min_{|x-x_0| = r}\{|f(x) - f(x_0)|\}\]
and
\[ H(x_0) = \limsup_{r\to 0}\frac{L(r)}{l(r)}.\]
We call $H(x_0)$ the {\it linear distortion} of $f$ at $x_0$. By \cite[Theorem II.4.3]{Rickman}, 
\[ H(x_0) \leq C = C(i(x_0,f)K_O(f)),\] 
where $i(x_0,f)$ denotes the local (topological) index of $f$ at $x_0$. 

It is easy to see that $l(r) \leq \rho(r) \leq L(r)$.  Since $H(x_0)$ is bounded above and, by \cite[Theorem III.4.7]{Rickman}, quasiregular maps satisfy a bi-H\"older condition, $\rho$ cannot behave too poorly.  

The following lemma is part of \cite[Theorem 2.7]{GMRV}.

\begin{lemma}
Let $f: \R^d \to \R^d$ be $K$-quasiregular.  Then for any $x_0\in \R^d$, $T(x_0,f)$ is nonempty. 
\end{lemma}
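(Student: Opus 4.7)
My plan is a normal families argument applied to $\mathcal{F}_{x_0}$ as $\delta\to 0$. Three ingredients are needed: each $F_\delta$ is itself $K$-quasiregular; the family is locally uniformly bounded on $\R^d$ for all sufficiently small $\delta$; and some subsequential limit is non-constant. The first ingredient is immediate: $x\mapsto \delta x+x_0$ and $y\mapsto (y-f(x_0))/\rho_f(\delta)$ are Euclidean similarities, and pre- and post-composition with similarities preserves the inner and outer distortion constants. Moreover $F_\delta(0)=0$, and a direct change of variables shows $\mu[F_\delta(B(0,1))]=\mu[B(0,1)]$ for every $\delta>0$.

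For the second ingredient, I would exploit the linear distortion bound $H(x_0)\le C$ recalled in the excerpt. For all sufficiently small $r$ this yields $L(r)\le (C+1)l(r)$, and combined with $l(r)\le \rho_f(r)\le L(r)$ one obtains $L(r)/\rho_f(r)\le C+1$ and $\rho_f(r)/l(r)\le C+1$. Reading this back through the definitions, $F_\delta$ sends the unit sphere of $\R^d$ into the spherical shell of radii $1/(C+1)$ and $C+1$. Upgrading local boundedness from the unit ball to an arbitrary ball $B(0,R)$ follows the same pattern applied at scale $R\delta$, together with a growth estimate for quasiregular mappings (available from the bi-H\"older estimate of Rickman, Theorem III.4.7) that controls $L(R\delta)/\rho_f(\delta)$ by a function of $R$ alone.

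Having local uniform boundedness together with a uniform distortion bound, the third step invokes the standard compactness theorem for families of $K$-quasiregular mappings: after extracting a subsequence $\delta_k\to 0$, there is locally uniform convergence $F_{\delta_k}\to \psi$, and the limit is either $K$-quasiregular or constant. The main obstacle is ruling out the constant case, but the measure normalization settles it cleanly. Since $F_\delta(0)=0$ for all $\delta$, the only possible constant limit is $\psi\equiv 0$; uniform convergence on the compact set $\overline{B(0,1)}$ would then force $F_{\delta_k}(B(0,1))\subset B(0,\varepsilon)$ for all $k$ large and any prescribed $\varepsilon>0$, contradicting $\mu[F_{\delta_k}(B(0,1))]=\mu[B(0,1)]$. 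Thus $\psi$ is $K$-quasiregular, non-constant, and belongs to $T(x_0,f)$, proving non-emptiness. The genuinely delicate step is the second one, as the $H(x_0)$ bound gives control only on the unit scale and has to be propagated to larger scales via growth estimates; the extraction and non-degeneracy in the third step are then routine given the explicit measure normalization.
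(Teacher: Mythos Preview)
The paper does not actually prove this lemma: it simply quotes it as part of \cite[Theorem 2.7]{GMRV} and moves on. Your normal families argument is exactly the standard one underlying that reference---uniform $K$-quasiregularity of the rescalings, local uniform boundedness via the linear distortion bound $H(x_0)\le C$ and the inequalities $l(r)\le\rho_f(r)\le L(r)$ (which the paper records just before the lemma), Montel-type compactness for $K$-quasiregular families, and exclusion of the constant limit using the measure normalization $\mu[F_\delta(B(0,1))]=\mu[B(0,1)]$. So your approach is correct and in line with how the cited source proceeds; the only step that deserves a little more care is, as you yourself flag, propagating the boundedness from the unit ball to $B(0,R)$, which amounts to controlling $\rho_f(R\delta)/\rho_f(\delta)$ uniformly in small $\delta$---this is indeed a consequence of the bi-H\"older estimates you invoke (and is made explicit in \cite{GMRV}).
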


Our main result concerns a special sub-class of quasiregular mappings.

\begin{definition} A quasiregular map $f: \R^d \to \R^d$ is called {\it uniformly $K$-quasiregular} if the maximal distortion of the iterates of $f$ satisfy $K(f^n) \leq K $ for all $n \geq 1.$
\end{definition}

We will abbreviate the term uniformly $K$-quasiregular to $K$-uqr, or simply uqr if we do not need to specify the bound on the distortion of the iterates.

In \cite{HMM}, Hinkkanen et al considered generalized derivatives arising as limits of 
$\frac{f(\lambda_k x)}{\lambda_k}$
as $\lambda_k \to 0$.  They used the fact that uqr maps are bi-Lipschitz on a neighborhood of $x_0$ if $i(x_0,f) = 1$. In general quasiregular mappings are only H\"older continuous and so the idea of infinitesimal spaces as defined above is the appropriate generalization.

Further, in \cite{HMM} a classification for fixed points of uqr mappings is given in analogy with that for holomorphic functions.
If we assume that $i(x_0,f)=1$, then $x_0$ is called attracting or repelling respectively if the infinitesimal space as defined in \cite{HMM} (which we remind the reader is different from that considered in this paper and, in particular, the definition given here does not distinguish between attracting and repelling fixed point) consists only of loxodromically  attracting or loxodromically repelling uniformly quasiconformal mappings respectively.
A map $\psi : \R^d \to \R^d$ is called loxodromically attracting or loxodromically repelling if $\psi$ fixes $0$ and $\infty$, and $\psi^m(x) \to 0$ for $x \in \R^n$ or $\psi^m(x) \to \infty$ for $x\neq 0$ respectively.  

We will need the following lemma on radial mappings, which we record here for the sequel. This is well-known, see for example \cite[Example 6.5.1]{IM}, but we include a proof for the convenience of the reader. 

\begin{lemma}
\label{lem:radial}
Let $B^d$ denote the unit ball in $\R^d$ and let $\alpha > 0$. The radial map given by $F(x) = x|x|^{\alpha-1}$ is $K$-quasiconformal with $K(F) = \max\{ \alpha^{d-1}, \alpha^{1-d} \} .$
\end{lemma}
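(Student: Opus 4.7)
The plan is to compute the derivative $F'(x)$ explicitly for $x \neq 0$ and read off the outer and inner distortion directly from the singular values. Because $F$ preserves rays through the origin and sends the sphere of radius $r$ to the sphere of radius $r^\alpha$, radial and tangential directions are eigendirections of $F'(x)$, which makes the computation essentially one-dimensional.

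First I would write $F(x) = \phi(r)\hat x$ with $\phi(r) = r^\alpha$, $r = |x|$, and $\hat x = x/r$. Differentiating along a straight line through $0$ shows that $F'(x)\hat x = \phi'(r)\hat x = \alpha r^{\alpha-1}\hat x$. Differentiating along a great-circle parametrization of the sphere $|x| = r$ shows that for any unit vector $v$ tangent to that sphere, $F'(x) v = (\phi(r)/r)\, v = r^{\alpha-1} v$. Hence, in an orthonormal basis whose first vector is $\hat x$, the matrix of $F'(x)$ is diagonal with entries $\alpha r^{\alpha-1}, r^{\alpha-1}, \dots, r^{\alpha-1}$. From this I read off
\[
J_F(x) = \alpha\, r^{d(\alpha-1)}, \qquad |F'(x)| = r^{\alpha-1}\max\{\alpha,1\}, \qquad \inf_{|h|=1}|F'(x)h| = r^{\alpha-1}\min\{\alpha,1\}.
\]

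Plugging into the definitions of $K_O$ and $K_I$ in \eqref{eq:1} and \eqref{eq:2} yields
\[
K_O(F) = \frac{\max\{\alpha,1\}^d}{\alpha}, \qquad K_I(F) = \frac{\alpha}{\min\{\alpha,1\}^d},
\]
both independent of $x$. In the case $\alpha \geq 1$ these simplify to $K_O = \alpha^{d-1}$ and $K_I = \alpha$, whose maximum is $\alpha^{d-1}$; in the case $0 < \alpha \leq 1$ they simplify to $K_O = \alpha^{-1}$ and $K_I = \alpha^{1-d}$, whose maximum is $\alpha^{1-d}$. In either case, $K(F) = \max\{\alpha^{d-1}, \alpha^{1-d}\}$, as claimed. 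The only small wrinkle is to justify that $F$ is genuinely quasiconformal on all of $\R^d$: it is a self-homeomorphism of $\R^d$ fixing $0$, it is smooth away from $0$ with uniformly bounded distortion, and a quick integration check shows $F \in W^{1,d}_{\mathrm{loc}}(\R^d)$ since $\alpha > 0$, so the pointwise bound extends to the whole space. There is no serious obstacle; the proof is essentially bookkeeping of the two cases.
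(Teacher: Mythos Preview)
Your proof is correct and follows essentially the same approach as the paper: both compute $F'(x)$ as a diagonal matrix with one radial eigenvalue $\alpha r^{\alpha-1}$ and $d-1$ tangential eigenvalues $r^{\alpha-1}$, then read off $K_O$ and $K_I$ in the two cases $\alpha\ge 1$ and $0<\alpha<1$. The only cosmetic difference is that the paper uses radial symmetry to reduce to the point $x=(t,0,\dots,0)$ and invokes removability of the origin, whereas you work in a frame adapted to each $x$ and check the Sobolev regularity directly.
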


\begin{proof}
Let $x\in B^d \setminus \{ 0 \}$. By radial symmetry, it suffices to assume that $x = (t,0,\ldots,0)$ for $t\in (0,1)$. Then $F(x) = (t^{\alpha},0,\ldots,0)$ and
\[F'(x)=\left[
 \begin{array}{ccccc}
   \alpha t^{\alpha -1}&&\text{\huge0}\\
    & t^{\alpha -1} & \\
     \text{\huge0}& & \ddots\\
 \end{array}.
\right]\]
Therefore the Jacobian is $J_F(x) = \alpha t^{d(\alpha-1)}$. We first assume $\alpha \geq 1$. Then the norm of the derivative is $|F'(x)| = \alpha t^{\alpha -1}$ and so
\[ \frac{|F'(x)|^d}{J_F(x)} = \alpha^{d-1}.\]
Similarly, $\ell(F'(x)) = \inf_{|h| = 1}|F'(x)h|^d = t^{\alpha -1}$ and so
\[ \frac{ J_F(x)}{\ell(F'(x))^d} = \alpha.\]
Since $0$ is removable for quasiconformal mappings, we have $K_O(F) = \alpha ^{d-1}$ and $K_I(F) = \alpha$, recalling \eqref{eq:1} and \eqref{eq:2}. In particular, $F$ is $\alpha^{d-1}$-quasiconformal.

On the other hand, if $0<\alpha <1$, then a similar calculation shows that $K_O(F) = \alpha^{-1}$ and $K_I(F) = \alpha^{1-d}$ and so $F$ is $\alpha^{1-d}$-quasiconformal.
\end{proof}

\subsection{Statement of results}

This paper will deal with the question of whether or not mappings are always simple everywhere. As we have seen above, holomorphic mappings are simple everywhere, but quasiregular mappings are only guaranteed to be simple almost everywhere.

\begin{proposition}
\label{prop:1}
Let $d\geq 2$, $K> 1$ and denote by $B^d$ the unit ball in $\R^d$.
Then there exists a $K$-quasiconformal map $F:B^d \to B^d$ so that $F$ is not simple at $0$.
\end{proposition}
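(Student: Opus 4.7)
My plan is to construct $F$ as a radial map $F(x)=\phi(|x|)\,x/|x|$ whose radial profile alternates between a linear branch and a power-$\alpha$ branch on dyadic annuli shrinking to $0$. The resulting $F$ will then admit two distinct subsequential limits in its rescaling family, so it will fail to be simple at $0$.

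Fix $\alpha>1$ with $\alpha^{d-1}=K$; by Lemma~\ref{lem:radial} the map $x\mapsto x|x|^{\alpha-1}$ is $K$-quasiconformal. On the dyadic annulus $A_n=\{x:2^{-n-1}\le|x|\le 2^{-n}\}$ declare $n$ even to be a \emph{linear} annulus and $n$ odd a \emph{power} annulus. Set $\phi(r)=r$ on $A_0$, and then extend $\phi$ inward so that on each $A_n$ one has $\phi(r)=c_n r$ when $n$ is even and $\phi(r)=c_n r^{\alpha}$ when $n$ is odd, choosing the positive constants $c_n$ recursively to make $\phi$ continuous at each point $r=2^{-n}$; put $F(0)=0$.

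First I would verify the basic properties of $F$. On each piece $\phi$ is positive and strictly increasing, and the $c_n$'s are chosen to match at every transition, so $\phi$ is strictly increasing and continuous on $(0,1]$ with $\phi(r)\to 0$ as $r\to 0$. Thus $F:B^d\to B^d$ is a homeomorphism fixing $0$. On any linear annulus $F$ restricts to the homothety $x\mapsto c_n x$, which is conformal, while on any power annulus $F(x)=c_n x|x|^{\alpha-1}$ is $K$-quasiconformal by Lemma~\ref{lem:radial}. Since the transition spheres form a countable set of null sets and $F$ is ACL, $F$ is globally $K$-quasiconformal.

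Next, because $F$ sends $B(0,\delta)$ onto $B(0,\phi(\delta))$, one has $\rho_F(\delta)=\phi(\delta)$ and
\[F_\delta(x)=\frac{\phi(\delta|x|)}{\phi(\delta)}\cdot\frac{x}{|x|}.\]
For $\delta_k=2^{-2k}$, which lies at the outer edge of a linear annulus, a direct computation using the continuity recursion for $c_n$ shows that $(c_{2k+1}/c_{2k})\delta_k^{\alpha-1}=2^{\alpha-1}$, and more generally that the function $r\mapsto \phi(\delta_k r)/\phi(\delta_k)$ is independent of $k$. Hence $F_{\delta_k}$ is a single radial map $\psi_1$ that coincides with the identity on $\{|x|\in[1/2,1]\}$. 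The analogous calculation at the shifted scale $\delta_k'=2^{-2k-1}$ (outer edges of power annuli) produces a second fixed map $\psi_2$ satisfying $\psi_2(x)=|x|^{\alpha-1}x$ on $\{|x|\in[1/2,1]\}$. Both $\psi_1$ and $\psi_2$ therefore lie in $T(0,F)$, and they differ (for instance $\psi_1(\tfrac34 e_1)=\tfrac34 e_1$ while $\psi_2(\tfrac34 e_1)=(\tfrac34)^{\alpha}e_1$), so $T(0,F)$ contains at least two elements and $F$ is not simple at $0$.

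The main obstacle I anticipate is the exact self-similarity check that makes $\phi(\delta_k\,\cdot)/\phi(\delta_k)$ literally independent of $k$: one must track how the ratios $c_{n+1}/c_n$ scale against the dyadic factors $2^{-(n+1)(\alpha-1)}$ across successive transitions and verify that they cancel on the nose. Once this self-similarity is in hand, the two fixed maps $\psi_1$ and $\psi_2$ are concrete radial maps rather than abstract subsequential limits, and the remaining steps (quasiconformality of $F$, non-simplicity) are immediate from Lemma~\ref{lem:radial} and the explicit formulas.
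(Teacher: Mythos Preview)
Your proposal is correct and follows essentially the same strategy as the paper: build a radial map whose profile oscillates between two different power laws on a sequence of annuli shrinking to $0$, then exhibit two distinct subsequential limits of the rescalings. The only differences are cosmetic---you alternate exponents $1$ and $\alpha$ on dyadic annuli in the \emph{domain}, whereas the paper alternates exponents $K$ and $1/K$ on annuli chosen so that the \emph{image} is dyadic---and your choice in fact makes the self-similarity check (the ``main obstacle'' you flag) slightly cleaner, since one branch is literally linear and the recursion $c_{2k}=2^{-k(\alpha-1)}$, $c_{2k+1}=2^{(k+1)(\alpha-1)}$ gives $\phi(2^{-2k}r)/\phi(2^{-2k})$ independent of $k$ by a one-line computation on each dyadic shell.
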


This result is not hard to prove and is undoubtedly known to experts in the field. We will provide a radial example to motivate what will follow in the sequel.

Our main result concerns uniformly quasiregular mappings. By a result of Hinkkanen and Martin \cite{HM}, if $x_0$ is a repelling fixed point of a uqr map $f$, then there is a quasiconformal map $L$ which conjugates $f$ in a neighborhood of $x_0$ to the map $x\mapsto 2x$. Correspondingly, by taking inverses, if $x_0$ is an attracting fixed point of a uqr map $f$, then $f$ can be locally quasiconformally conjugated to $x \mapsto x/2$. This latter map is clearly simple, and so perhaps this forces $f$ to be simple at $x_0$. We will show this is not the case.

\begin{theorem}
\label{thm:main}
There exists a uniformly quasiconformal map $H:B^d \to B^d$ with an attracting fixed point at $x_0=0$ and so that $f$ is not simple at $0$.
\end{theorem}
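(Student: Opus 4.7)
The plan is to set $H := \phi \circ T \circ \phi^{-1}$, where $T(x) = x/2$ is the linear contraction on $B^d$ and $\phi \colon B^d \to B^d$ is a carefully chosen quasiconformal homeomorphism fixing $0$. Since $T$ is conformal, every iterate satisfies $K(H^n) = K(\phi \circ T^n \circ \phi^{-1}) \leq K(\phi)^2$, so $H$ is automatically uniformly quasiconformal, and the relation $H^n(x) = \phi(\phi^{-1}(x)/2^n) \to 0$ makes $0$ an attracting fixed point. The theorem therefore reduces to constructing a single $\phi$ for which the conjugate $H$ fails to be simple at $0$.

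I take $\phi$ to be radial, $\phi(x) = \Psi(|x|)\,x/|x|$, with $\Psi \colon [0,1] \to [0,1]$ built as a piecewise power function. Fix two distinct exponents $\alpha,\beta > 0$ and set $r_n = 2^{-n}$. Prescribe a decreasing sequence $s_n = \Psi(r_n)$ so that $\log_2(s_n/s_{n+1})$ alternates between $\alpha$ and $\beta$ according to the parity of $n$, and on each shell $[r_{n+1}, r_n]$ define $\Psi(r) = A_n r^{\alpha_n}$ as the unique power function matching the endpoint values, where $\alpha_n \in \{\alpha,\beta\}$ alternates with $n$. By Lemma~\ref{lem:radial} applied on each shell, $\phi$ is quasiconformal on $B^d \setminus \{0\}$ with distortion bounded uniformly by $\max\{\alpha^{d-1}, \beta^{d-1}, \alpha^{1-d}, \beta^{1-d}\}$, and extends across the removable singularity at $0$.

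Since $T$ and $\phi$ are both radial, $H$ is radial with $H(x) = h(|x|)\,x/|x|$ where $h(r) = \Psi(\Psi^{-1}(r)/2)$, and by symmetry $\rho_H(\delta) = h(\delta)$, so
\[
F^H_\delta(x) = \frac{h(\delta|x|)}{h(\delta)} \cdot \frac{x}{|x|}.
\]
The identity $\Psi^{-1}(s_n)/2 = r_{n+1}$ gives $h(s_n) = s_{n+1}$, and a direct substitution with $A_n = s_n \cdot 2^{n\alpha_n}$ shows that for $r$ in the outermost rescaled shell $[s_{n+1}/s_n, 1]$ of the unit interval, all $n$-dependent factors cancel and $h(s_n r)/s_{n+1} = r^{\alpha_{n+1}/\alpha_n}$; this equals $r^{\beta/\alpha}$ when $n$ is even and $r^{\alpha/\beta}$ when $n$ is odd. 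An analogous cancellation on each inner rescaled shell $[s_{n+k+1}/s_n, s_{n+k}/s_n]$ shows that the whole normalized radial profile $r \mapsto h(s_n r)/s_{n+1}$ is literally independent of $n$ within each parity class. Passing to the limits along $\delta = s_{2k}$ and $\delta = s_{2k+1}$ then produces two distinct radial elements of $T(0,H)$, and $H$ is not simple at $0$.

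The main obstacle is the combinatorial bookkeeping for the last step: verifying that the rescaled profile is genuinely parity-periodic across \emph{every} inner shell, not merely the outermost one. This is a self-similarity statement following from the $2$-periodicity of $(\alpha_n)$, but it requires carefully tracking both the exponent and the matching constant $A_n$ through the entire chain of shells and checking that the $s_n$-factors telescope cleanly into the prescribed ratios $2^\alpha, 2^\beta$.
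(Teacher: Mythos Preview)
Your proposal is correct and follows essentially the same route as the paper: conjugate $x\mapsto x/2$ by a radial quasiconformal map whose radial profile is a piecewise power function with $2$-periodic exponents, then exhibit two distinct elements of $T(0,H)$ by taking limits along the even- and odd-indexed scale sequence. The only cosmetic differences are that the paper writes the conjugation as $F^{-1}\circ T\circ F$ (so your $\Psi$ is the paper's $f^{-1}$), fixes the specific exponent pair $(K,1/K)$ rather than a general $(\alpha,\beta)$, and carries out in full the shell-by-shell cancellation you identify as the ``main obstacle''; your 2-periodicity/telescoping argument is exactly what makes that computation close.
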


Our construction is based on conjugating $x\mapsto x/2$ by the quasiconformal map we will construct in Proposition \ref{prop:1}. In the proof of Theorem \ref{thm:main}, we will exhibit two distinct elements of $T(H,0)$, but in fact more is true.

\begin{corollary}
\label{cor:0}
Both $F$ and $H$, defined as in Proposition \ref{prop:1} and Theorem \ref{thm:main} respectively, contain uncountably many elements in their respective infinitesimal spaces at $0$.
\end{corollary}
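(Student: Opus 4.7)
The plan is to show that $T(0,F)$ and $T(0,H)$ are each connected subsets of a suitable metric space containing at least two distinct points; this forces uncountability, since every connected metric space with more than one point has cardinality at least that of the continuum (the continuous distance function to a chosen point covers a non-degenerate closed interval by the intermediate value theorem). The existence of two distinct elements of $T(0,F)$ and $T(0,H)$ is precisely what is delivered by the constructions underlying Proposition \ref{prop:1} and Theorem \ref{thm:main}. The substantive content is therefore connectedness, which I will argue for $T(0,F)$; the argument for $T(0,H)$ is identical.

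I would endow $C(\R^d,\R^d)$ with the compact-open topology and regard $\gamma(\delta) := F_\delta$ as a path $(0,\infty)\to C(\R^d,\R^d)$. Continuity of $\gamma$ requires two ingredients: joint continuity of $(x,\delta)\mapsto F(\delta x)-F(0)$, which is immediate from continuity of $F$, and continuity of the normalisation $\delta\mapsto \rho_F(\delta)$. By the quasiregular normal-family property (already invoked in the discussion around Lemma 2.4), for every $\epsilon>0$ the set $\overline{\gamma((0,\epsilon])}$ is compact in the compact-open topology. Writing $K_\epsilon=\overline{\gamma((0,\epsilon])}$, each $K_\epsilon$ is the closure of the continuous image of a connected interval, hence connected, and it sits inside a fixed compact set, hence is itself compact. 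Therefore
\[
T(0,F) \;=\; \bigcap_{\epsilon>0} K_\epsilon
\]
is a nested intersection of nonempty compact connected sets in a Hausdorff space, so is itself nonempty, compact, and connected.

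The one delicate point is verifying continuity of $\rho_F$, i.e.\ continuity of $\delta\mapsto \mu[F(B(0,\delta))]$, and this is the expected main obstacle. It is however not a serious one: since $F$ is quasiconformal (hence a homeomorphism), the set-valued function $\delta\mapsto F(B(0,\delta))$ is strictly increasing with boundary measure zero, so its Lebesgue measure is monotone and continuous in $\delta$, and in the explicit radial constructions used for $F$ and $H$ the normalisation $\rho_F$ is in fact given by a closed-form formula that is manifestly continuous. With the continuity of $\gamma$ confirmed, the passage from two distinct limit maps to uncountably many is purely topological, and the corollary follows.
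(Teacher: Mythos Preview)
Your argument is correct and takes a genuinely different, more abstract route than the paper. The paper works pointwise: it fixes a single radius $r_0$ at which the two known limit maps $P_1,P_2$ differ, observes that the scalar function $t\mapsto f(r_0 t)/f(t)$ is continuous, and applies the Intermediate Value Theorem to produce, for each $\lambda$ between $P_1(r_0)$ and $P_2(r_0)$, a sequence $t_n\to 0$ along which this ratio tends to $\lambda$; normality then upgrades each such sequence to a limit map $F_\lambda\in T(0,F)$ with $|F_\lambda|\equiv\lambda$ on $\{|x|=r_0\}$, and distinct $\lambda$ give distinct $F_\lambda$. You instead show that $T(0,F)$ is a connected metric space, by realising it as the nested intersection of the compact connected sets $K_\epsilon=\overline{\gamma((0,\epsilon])}$, and then invoke the general fact that a connected metric space with two points has continuum cardinality.

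Your route buys more: connectedness of $T(0,F)$ is strictly stronger than uncountability, and your argument uses nothing about the radial construction beyond continuity of $\rho_F$. Once one checks that $\delta\mapsto\rho_f(\delta)$ is continuous for an arbitrary quasiregular $f$ (quasiregular maps are open and satisfy Lusin's condition $(N)$, so $\mu[f(B(x_0,\delta))]$ is monotone in $\delta$ with null-set boundary increments), your argument in fact answers the question the paper raises immediately after the corollary: $T(x_0,f)$ is always connected, hence either a singleton or uncountable. The paper's approach, by contrast, stays close to the explicit construction and exploits the one-dimensional radial part directly. One small technicality to clean up in your write-up: $F_\delta$ is only defined on $B(0,1/\delta)$, so $\gamma$ does not literally land in $C(\R^d,\R^d)$; this is harmless once you restrict to $\epsilon$ small relative to any fixed compact, or work in $C(B(0,R),\R^d)$ for each $R$ and intersect over $R$ at the end.
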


It would be interesting to know if $T(f,x_0)$ is always either simple or contains uncountably many elements. This will be the subject of future work.

Finally, as a corollary to Theorem \ref{thm:main}, we will show how any uniformly quasiregular map with an attracting or repelling fixed point can be conjugated to a non-simple one.

\begin{corollary}
\label{cor:1}
Let $f$ be a uniformly quasiregular mapping, $x_0$ a fixed point of $f$ with $i(x_0,f) = 1$ and suppose $f$ is either repelling or attracting at $x_0$. Then there exists a quasiconformal map $g$ so that $g \circ f \circ g^{-1}$ is not simple at $g(x_0)$.
\end{corollary}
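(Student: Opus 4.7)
The plan is to reduce Corollary \ref{cor:1} to Theorem \ref{thm:main} by means of a local quasiconformal change of coordinates. Since $f$ is uniformly quasiregular, $i(x_0,f) = 1$, and $x_0$ is attracting or repelling, the Hinkkanen--Martin linearization cited immediately before Theorem \ref{thm:main} provides a neighborhood $U$ of $x_0$ and a quasiconformal map $L$ on $U$ with $L(x_0) = 0$ and
\[ L \circ f \circ L^{-1}(x) = \lambda x \]
for all $x$ in a neighborhood of $0$, where $\lambda = 1/2$ in the attracting case and $\lambda = 2$ in the repelling case. Post-composing $L$ with a sufficiently small dilation preserves the above identity (since dilations commute with scalar multiplication by $\lambda$) and allows us to assume $L(U) \subset B^d$.

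Let $F: B^d \to B^d$ be the quasiconformal map produced in Proposition \ref{prop:1}, chosen so that $F(0) = 0$ and $F$ is not simple at $0$. Set $g = F \circ L$, which is quasiconformal on $U$ and satisfies $g(x_0) = 0$. A direct computation then yields
\[ g \circ f \circ g^{-1} = F \circ (x \mapsto \lambda x) \circ F^{-1} \]
on a neighborhood of $0$. In the attracting case $\lambda = 1/2$ this is precisely the map $H$ constructed in Theorem \ref{thm:main}, which is shown there to be non-simple at $0$. Since the definition of simpleness at a point depends only on the behavior of the map on arbitrarily small neighborhoods of that point, we conclude that $g \circ f \circ g^{-1}$ is not simple at $g(x_0) = 0$, which is what we needed.

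The repelling case requires the analog of Theorem \ref{thm:main} with the linearization $x \mapsto 2x$ in place of $x \mapsto x/2$. The construction of $H$ in Theorem \ref{thm:main} is based on conjugating the linear map by $F$ from Proposition \ref{prop:1}, and exhibiting two distinct elements in its infinitesimal space at $0$; this argument depends only on the structure of $F$ at $0$ together with the fact that the linear map is a similarity fixing $0$, not on whether it is contracting or expanding. Thus the same construction with $x \mapsto 2x$ produces a uniformly quasiconformal map with a repelling fixed point at $0$ that is non-simple there, and the preceding argument then gives the corollary in this case as well. The main obstacle in the whole program is therefore not the conjugation argument itself, which is formal, but ensuring that the construction of Theorem \ref{thm:main} genuinely carries over to the repelling regime; as indicated, this is essentially a symmetry of the construction.
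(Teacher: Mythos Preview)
Your argument follows the same conjugation idea as the paper, but there are two points worth flagging.

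First, a slip: with $g = F \circ L$ you obtain $g\circ f\circ g^{-1} = F\circ T\circ F^{-1}$, whereas the map $H$ of Theorem \ref{thm:main} is $F^{-1}\circ T\circ F$ (recall $h(r)=f^{-1}(f(r)/2)$). So your conjugate is not literally $H$; to invoke Theorem \ref{thm:main} as written you should take $g = F^{-1}\circ L$, as the paper does.

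Second, and more substantively, your $g$ is only defined on the small neighborhood $U$, so $g\circ f\circ g^{-1}$ is only a germ at $0$ and not a uniformly quasiregular mapping of $\R^d$. The paper regards this as a genuine step: after obtaining the local identity $H = F^{-1}\circ L\circ f\circ L^{-1}\circ F$, it extends $G = F^{-1}\circ L$ to a global quasiconformal homeomorphism $g:\R^d\to\R^d$ by setting $g$ equal to the identity outside a large ball and interpolating across the intermediate ring domain using Sullivan's Annulus Theorem \cite{TV}. Only then is $g\circ f\circ g^{-1}$ a bona fide uqr mapping agreeing with $H$ near $0$. Since simplicity is local, your argument does establish non-simplicity of the germ, but the statement is meant to produce a global quasiconformal conjugacy, and that extension is the missing ingredient.

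For the repelling case, the paper takes a slightly different route from yours: rather than rerunning the construction of Theorem \ref{thm:main} with $x\mapsto 2x$, it simply observes that a local branch of $f^{-1}$ has $x_0$ as an attracting fixed point and reduces to the case already handled. Your symmetry argument is reasonable, but the paper's reduction avoids having to revisit any computations.
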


\section{Infinitesimal spaces and quasiconformal maps}

In this section, we give an example to show that it is possible for a quasiconformal mapping to not be simple at a given point.

\begin{proof}[Proof of Proposition \ref{prop:1}]
Fix $d\geq 2$, $K>1$ and let $B^d$ be the unit ball in $\R^d$. We will construct a radial quasiconformal map from $B^d$ to itself which is not simple at $0$. To that end, we will construct a continuous increasing function $f:[0,1] \to [0,1]$ for which the infinitesimal space is not simple, and then radially extend to a quasiconformal mapping.

The main idea is to construct a decreasing sequence $r_n \to 0$ so that $f(r_n) = 2^{-n}$ and the behavior of $f$ is specified by exponents $k_n$ and constants $C_n$ so that $f(r) = C_nr^{k_n}$ on subintervals of the form $[r_n,r_{n-1}]$. We fix $f(1)=1$ so that $r_0=1$ and $C_1=1$. 

Next, by continuity,
\begin{align*}
C_2 r_{1}^{k_2} &=C_1 r_{1}^{k_1} = r_1^{k_1} \Longrightarrow C_2 = r_1^{k_1 - k_2} \\
C_3 r_{2}^{k_3} &=C_2 r_{2}^{k_2} \Longrightarrow C_3 = r_1^{k_1 - k_2} r_2^{k_2 - k_3} \\
C_n &= r_1^{k_1 - k_2} r_2^{k_2 - k_3} ... r_{n-1}^{k_{n-1} - k_n} = \prod_{m=1}^{n-1} r_m^{k_{m} -k_{m+1}}
\end{align*}
Further, note that $\frac{C_n}{C_{n+1}} = \frac{1}{r_{n}^{k_n - k_{n+1}}} = r_{n}^{k_{n+1}-k_n}$.
Since any $f$, as above, is an increasing function of $r$, we may choose $r_n$ such that $f(r_n)= \frac{1}{2^n}$. Then
\begin{equation}
\label{eq:Cr}
C_n = \left ( \frac{1}{2} \right )^n r_n^{-k_n}.
\end{equation}
Hence
\[f(r_n)=\left(C_{n+1} \right) r_n^{k_{n+1}} =\left(\prod_{m=1}^{n} r_m^{k_{m} -k_{m+1}} \right) r_n^{k_{n+1}} = \frac{1}{2^n}\]
and
\[f(r_{n+1})=\left(\prod_{m=1}^{n+1} r_m^{k_{m} -k_{m+1}}\right) r_{n+1}^{k_{n+2}} =\left(\prod_{m=1}^{n} r_m^{k_{m} -k_{m+1}}\right) r_{n+1}^{k_{n+1}} .\]
Thus,
\[f(r_{n+1})=f(r_n)\left(\frac{r_{n+1}}{r_n}\right)^{k_{n+1}} \]
and therefore
\begin{equation} \label{rn}
f(r_{n+1})=\left(\frac{1}{2}\right)^{n+1}=\left(\frac{1}{2}\right)^{n}\left(\frac{r_{n+1}}{r_n}\right)^{k_{n+1}}\Longrightarrow r_{n}= \left(\frac{1}{2}\right)^{\frac{1}{k_{n}}} r_{n-1}
 \end{equation}
and
\[r_n = \left(\frac{1}{2}\right)^{\frac{1}{k_{n}} +\frac{1}{k_{n-1}} + ... + \frac{1}{k_1} }.\]
If $(k_n)_{n=1}^{\infty}$ is a bounded sequence of positive real numbers, then $r_n \downarrow 0$. In conclusion $f:[0,1]\to [0,1]$ is a homeomorphism.

While the general construction above is more flexible, we will use the explicit sequence $(k_n)_{n=1}^{\infty}$ given by 
\begin{equation}
\label{k}
k_{2n-1}=K \text{ and } k_{2n}=\frac{1}{K}
\end{equation} 
for a fixed $K>1$. With this choice, we have
\begin{equation}\label{eq:r}
r_{2n} = 2^{-(nK + n/K)}, \quad, r_{2n - 1} = 2^{-((n - 1)K + n/K)},
\end{equation}
\begin{equation} \label{eq:C} 
C_{2n} = 2^{n(1/K^2 - 1)}, \quad C_{2n - 1}= 2^{(n - 1)(K^2 - 1)}.
\end{equation}
We will need to have formulas for products in the sequence $(r_n)_{n=1}^{\infty}$. In particular, one can check that
\begin{equation}
\label{eq:rs}
r_{2n} r_{m}=  r_{2n+m}
\end{equation}
for any $n,m\in \N$. On the other hand, 
\begin{equation}
\label{sub}
r_{2n+1}r_{2m+1}=r_{2(n+m)+1}\left(\frac{1}{2}\right)^{\frac{1}{K}}=r_{2(n+m)+2} \left(\frac{1}{2}\right)^{\frac{1}{K}-K},
\end{equation}
but we do have
\[r_{2n+1}r_{2m+2} = r_{2(n+m)+2}<r_{2n+1}r_{2m+1}<r_{2(n+m)+1} = r_{2n+1}r_{2m}.\]

We are now in a position to compute elements of the infinitesimal space of $f$. We will be looking for limits of $f(rt_k)/f(t_k)$ where $t_k \downarrow 0$ as $k \to \infty$.

First, let $r\in [r_m,r_{m-1}]$ and $t=r_{2n}$, then $rt\in [r_{2n+m},r_{2n+m-1}]$. So, by \eqref{k}, \eqref{eq:r}, \eqref{eq:C} and \eqref{eq:rs}, we have
\begin{align*}
\frac{f(rt)}{f(t)}&=\frac{C_{2n+m}\left(rr_{2n}\right)^{k_{2n+m}}}{2^{-2n}}\\
&=2^{2n}\left(\frac{1}{2}\right)^{2n+m} r_{2n+m}^{-k_{2n+m}} r_{2n}^{k_{2n+m}}r^{k_{2n+m}}\\
&=\left(\frac{1}{2}\right)^{m} r_{m}^{-k_{2n+m}} r^{k_{2n+m}}\\
&=\begin{cases} 
       \left(\frac{1}{2}\right)^{m} r_{m}^{-\frac{1}{K}} r^{\frac{1}{K}},  &\text{for $m$ even} \\
       \left(\frac{1}{2}\right)^{m} r_{m}^{-K} r^{K},  &\text{for $m$ odd.}
       \end{cases}
\end{align*}

Next, for $t=r_{2n-1}$ and $r\in [r_{2m+2},2^{-K+1/K}r_{2m+1}]$, then $rt\in [r_{2n+m+1},r_{2n+m}]$. So,
again using \eqref{k}, \eqref{eq:r}, \eqref{eq:C} and \eqref{eq:rs}, we have
\begin{align*}
\frac{f(rt)}{f(t)}&=\frac{C_{2n+2m+1}\left(rr_{2n-1}\right)^{k_{2n+2m+1}}}{\left(\frac{1}{2^{2n-1}}\right)}\\
&=2^{2n-1}\left(\frac{1}{2}\right)^{2n+2m+1} r_{2n+2m+1}^{-K} r_{2n-1}^{K}r^{K}\\
&=\left(\frac{1}{2}\right)^{2m+2} r_{2m+2}^{-K} r^{K}.\\
\end{align*}

For $t=r_{2n-1}$ and $r\in [2^{-K+1/K}r_{2m+1},r_{2m}]$, then $rt\in [r_{2n+m},r_{2n+m-1}]$. So, once more using
\eqref{k}, \eqref{eq:r}, \eqref{eq:C} and this time \eqref{sub}, we have
\begin{align*}
\frac{f(rt)}{f(t)}&=\frac{C_{2n+2m}\left(rr_{2n-1}\right)^{k_{2n+2m}}}{\left(\frac{1}{2^{2n-1}}\right)}\\
&=2^{2n-1}\left(\frac{1}{2}\right)^{2n+2m} r_{2n+2m}^{-\frac{1}{K}} r_{2n-1}^{\frac{1}{K}}r^{\frac{1}{K}}\\
&=\left(\frac{1}{2}\right)^{2m+1}\left(\frac{1}{2}\right)^{\left(K-\frac{1}{K}\right)\left(-\frac{1}{K}\right)} r_{2m+1}^{-\frac{1}{K}} r^{\frac{1}{K}}\\
&=\left(\frac{1}{2}\right)^{2m+\frac{1}{K^2}}r_{2m+1}^{-\frac{1}{K}} r^{\frac{1}{K}}\\
&=\left(\frac{1}{2}\right)^{2m}r_{2m}^{-\frac{1}{K}} r^{\frac{1}{K}}
\end{align*}
where at the last step we have used \eqref{rn}.

In each case above, $\frac{f(rt)}{f(t)}$ is independent of $n$. We may therefore consider the two sequences given by $t_k = r_{2k}$ and $t_k = r_{2k-1}$ and the corresponding limit functions:
\begin{align*}
P_1(r)&=\begin{cases} 
\left(\frac{1}{2}\right)^{2m}r_{2m}^{-\frac{1}{K}}r^{\frac{1}{K}}, & r\in[r_{2m},r_{2m-1}]\\[5pt]
\left(\frac{1}{2}\right)^{2m-1}r_{2m-1}^{-K}r^{K}, & r\in[r_{2m-1},r_{2m-2}]
       \end{cases}
       \\
P_2(r)&=\begin{cases} 
\left(\frac{1}{2}\right)^{2m+2}r_{2m+2}^{-K}r^{K}, & r\in[r_{2m+2},2^{-K+1/K}r_{2m+1}]\\[5pt]
\left(\frac{1}{2}\right)^{2m}r_{2m}^{-\frac{1}{K}}r^{\frac{1}{K}}, & r\in[2^{-K+1/K}r_{2m+1},r_{2m}].
       \end{cases}
\end{align*}
These are distinct elements of the infinitesimal space of $f$, and thus $f$ is not simple at $0$.

We then define $F:B^d\to B^d$ in terms of spherical coordinates by
\[ F(r,\sigma) = (f(r), \sigma),\]
for $r\in (0,1)$ and $\sigma \in S^{d-1}$, and by $F(0)=0$. By Lemma \ref{lem:radial} and the fact that $(n-1)$-spheres are removable for quasiregular mappings by \cite[Theorem VII.1.19]{Rickman}, we see that $F$ is quasiconformal with maximal distortion $K^{d-1}$. In particular, any maximal distortion larger than $1$ can be prescribed for $F$. Finally, by the calculation above, we see that the corresponding quasiconformal extensions of $P_1$ and $P_2$ are two distinct elements of the infinitesimal space of $F$.
\end{proof}

\section{Infinitesimal spaces and uqr maps}

In this section we will generalize the result of the previous section to show that we can construct a uniformly quasiconformal map $B^d\to B^d$ which is not simple at the fixed point $0$.

\begin{proof}[Proof of Theorem \ref{thm:main}]
Since every uniformly quasiconformal map which has an attracting fixed point at $0$ can be conjugated by a quasiconformal map to $x\mapsto x/2$, to construct the required uniformly quasiconformal map, we will conjugate $x\mapsto x/2$ by the quasiconformal map $F$ constructed in Proposition \ref{prop:1}. The resulting map will also be radial, and so we first consider the map $[0,1] \to [0,1]$ defined by
\[ h(r) = f^{-1}\left (\frac{ f(r)}{2} \right ),\]
where $f$ is the map from Proposition \ref{prop:1}.

We will give explicit formulas for $h$. To this end, we observe that $h$ acts by mapping the interval $[r_n,r_{n-1}]$ to $[2^{-n},2^{-n+1}]$ under $f$, then mapping to $[2^{-n-1},2^{-n}]$ and finally to $[r_{n+1},r_n]$ by $f^{-1}$.
Therefore, $h$ may be written as 
\begin{equation}
\label{eq:hdef}
  h(r) = \left\{\def\arraystretch{1.2}%
  \begin{array}{ll}
   \left(\frac{C_{2n - 1}}{2C_{2n}}\right)^K r^{K^2}, & r \in [r_{2n - 1}, r_{2n - 2}]\\
  \left(\frac{C_{2n}}{2C_{2n+1}}\right)^{1/K} r^{1/K^2}, & r \in [r_{2n}, r_{2n - 1}]\\
  \end{array}\right.
\end{equation}
for $n \in \N$. 
Using the formulas for $C_n$ from \eqref{eq:C}, this simplifies to
\[
  h(r) = \left\{\def\arraystretch{1.2}%
  \begin{array}{ll}
    2^{(n - 1)K^3 - n/K} r^{K^2}, & r \in [r_{2n - 1}, r_{2n - 2}]\\
  2^{n/K^3 - 1/K - nK} r^{1/K^2}, & r \in [r_{2n}, r_{2n - 1}].\\
  \end{array}\right.
\]
By the construction of $h$, the map defined in spherical coordinates by
\[ H(r,\sigma) = (h(r),\sigma)\]
for $r\in (0,1)$ and $\sigma \in S^{d-1}$ with $H(0)=0$ is uniformly quasiconformal, since it is a quasiconformal conjugate (recall $F$ is quasiconformal) of $x\mapsto x/2$, and has an attracting fixed point at $0$. We need to show that $H$ is not simple at $0$, and it suffices to show this is so for $h$.

We will be looking for limits of $h(rt_k)/h(t_k)$ as $t_k \downarrow 0$, using two sequences corresponding to $t_k = r_{2k}$ and $t_k = r_{2k-1}$, as in the proof of Proposition \ref{prop:1}.

Let $r \in [r_{m+1},r_m]$ and let $t=r_{2n}$.
Then, by \eqref{k}, \eqref{sub} and \eqref{eq:hdef}, we have
\begin{align*}
h\left(rt\right)&=\left(\frac{1}{2}\right)^{\frac{1}{k_{2n+m+2}}} r_{2n+m+1}^{1-\frac{k_{2n+m+1}}{k_{2n+m+2}}} \left(rr_{2n}\right)^{\frac{k_{2n+m-1}}{k_{2n+m+2}}}\\
&=\left(\frac{1}{2}\right)^{\frac{1}{k_{2n+m+2}}} r_{m+1}^{1-\frac{k_{2n+m+1}}{k_{2n+m+2}}} r_{2n}r^{\frac{k_{2n+m-1}}{k_{2n+m+2}}}\\
&=\begin{cases}
\left(\frac{1}{2}\right)^{K} r_{m+1}^{1-K^2} r_{2n}r^{K^2}, &\text{for $m$ even}\\[5pt]
\left(\frac{1}{2}\right)^{\frac{1}{K}} r_{m+1}^{1-\frac{1}{K}^2} r_{2n}r^{\frac{1}{K}^2}, &\text{for $m$ odd}
\end{cases}
\end{align*}
Hence, by \eqref{rn}, for $r \in [r_{m+1},r_m]$ with $m$ even,
\begin{align*}
\frac{h(rr_{2n})}{h(r_{2n})}&=\left(\frac{1}{2}\right)^Kr_{m+1}^{1-K^2}\left ( \frac{r_{2n}}{r_{2n+1}}\right ) r^{K^2}\\
&=\left(\frac{1}{2}\right)^{K-\frac{1}{K}}r_{m+1}^{1-K^2}r^{K^2}\\
&=r_{m}^{1-K^2}r^{K^2}
\end{align*}
and for $r \in [r_{m+1},r_m]$ with $m$ odd,
\begin{align*}
\frac{h(rr_{2n})}{h(r_{2n})}&=\left(\frac{1}{2}\right)^{\frac{1}{K}}r_{m+1}^{1-\frac{1}{K^2}}\left ( \frac{r_{2n}}{r_{2n+1}}\right ) r^{\frac{1}{K^2}}\\
&=r_{m+1}^{1-\frac{1}{K^2}}r^{\frac{1}{K^2}}.
\end{align*}

Next, let $t=r_{2n-1}$. As with $f$ in Proposition \ref{prop:1}, we must further partition the location of $r$.
For $r\in [r_{2m+2},2^{-K+1/K}r_{2m+1}]$ we have $rt\in[r_{2n+2m+1},r_{2n+2m}]$ using \eqref{sub}.
Therefore
\[h(rt)=\left(\frac{1}{2}\right)^K r_{2n+2m+1}^{1-K^2}r_{2n-1}^{K^2}r^{K^2}.\]
Hence,
\begin{align*}
\frac{h(rr_{2n-1})}{h(r_{2n-1})}&=\left(\frac{1}{2}\right)^{K}r_{2m+2}^{1-K^2}r_{2n-1}^{1-K^2}\left ( \frac{r_{2n-1}^{K^2}}{r_{2n}} \right ) r^{K^2}\\
&=\left(\frac{1}{2}\right)^{K}r_{2m+2}^{1-K^2} \left ( \frac{r_{2n-1}}{r_{2n}} \right ) r^{K^2}\\
&=r_{2m+2}^{1-K^2}r^{K^2} 
\end{align*}
by \eqref{rn}.
For $r\in [  2^{-K+1/K}r_{2m+1},r_{2m}]$, we have $rt \in[r_{2n+2m},r_{2n+2m-1}]$ using \eqref{rn}.
Therefore
\[h(rt)=\left(\frac{1}{2}\right)^{\frac{1}{K}} r_{2n+2m}^{1-\frac{1}{K^2}}r_{2n-1}^{\frac{1}{K^2}}r^{\frac{1}{K^2}}.\]
Hence, by \eqref{rn},
\begin{align*}
\frac{h(rr_{2n-1})}{h(r_{2n-1})}&=\left(\frac{1}{2}\right)^{\frac{1}{K}}\left(\frac{1}{2}\right)^{\left(K-\frac{1}{K}\right)\left(1-\frac{1}{K^2}\right)}r_{2m+1}^{1-\frac{1}{K^2}}r_{2n-1}^{1-\frac{1}{K^2}}\left ( \frac{r_{2n-1}^{\frac{1}{K^2}}}{r_{2n}}\right ) r^{\frac{1}{K^2}}\\
&=\left(\frac{1}{2}\right)^{\frac{1}{K}\left(K^2-1+\frac{1}{K^2}\right)}r_{2m+1}^{1-\frac{1}{K^2}}\left ( \frac{r_{2n-1}}{r_{2n}}\right ) r^{\frac{1}{K^2}}\\
&=\left(\left(\frac{1}{2}\right)^{-\frac{1}{K}} r_{2m+1}\right)^{1-\frac{1}{K^2}}r^{\frac{1}{K^2}}\\
&=r_{2m}^{1-\frac{1}{K^2}} r^{\frac{1}{K^2}}.
\end{align*}
Note that all these formulas are independent of $n$. We therefore get two different elements of the infinitesimal space of $h$, given by 
\begin{align*}
Q_1(r)&=
\begin{cases}
r_{2m}^{1-K^2}r^{K^2}, &\text{for $x\in [r_{2m+1},r_{2m}]$}\\[5pt]
r_{2m}^{1-\frac{1}{K^2}}r^{\frac{1}{K^2}},&\text{for $r\in[r_{2m},r_{2m-1}]$}
\end{cases}\\
Q_2(r)&=
\begin{cases}
r_{2m+2}^{1-K^2}r^{K^2},&\text{for $r\in[r_{2m+2},2^{-K+1/K} r_{2m+1}]$}\\[5pt]
r_{2m}^{1-\frac{1}{K^2}} r^{\frac{1}{K^2}}, &\text{for $r\in[2^{-K+1/K} r_{2m+1},r_{2m}].$}
\end{cases}
\end{align*}

We conclude that $h$ is not simple at $0$, and therefore the uniformly quasiconformal mapping $H$ is not simple at $0$.

\end{proof}

We have exhibited two elements of the infinitesimal spaces of $F$ and $H$ at $0$, but there are in fact uncountably many elements.

\begin{proof}[Proof of Corollary \ref{cor:0}]
Recall that $f:[0,1] \to [0,1]$ is the radial part of $F$, and recall the elements $P_1,P_2$ of $T(f,0)$ constructed in Proposition \ref{prop:1}.

First note that there exists $r_0 \in (0,1)$ such that $P_1(r_0)\neq P_2(r_0)$. Since $\frac{f(r_0 t)}{f(t)}$ is a continuous functon of $t>0$, by the Intermediate Value Theorem, it accumulates at every value between $P_1(r_0)$ and $P_2(r_0)$ as $t\rightarrow0$. Given such a value between $P_1(r_0)$ and $P_2(r_0)$, say $\lambda$, there exists a sequence $t_n\rightarrow0$ so that $\frac{f(r_0 t_n)}{f(t_n)}\rightarrow \lambda$.

Since the family $\{ \frac{F(x t_n)}{\rho_F(t_n)} :n\in \N \}$ is normal (compare with \cite[Theorem 2.7]{GMRV}), on a subsequence $\frac{F(x t_n)}{\rho_F(t_n)}$ converges to some quasiconformal mapping $F_\lambda (x)$ with $|F_{\lambda}(x)| = \lambda$ when $|x| = r_0$. As $\lambda$ varies, we obtain uncountably many mappings in the infinitesimal space of $F$ at $0$.

The proof for $H$ is similar.
\end{proof}

We can use Theorem \ref{thm:main} to show that any attracting or repelling fixed point $x_0$ of a uqr map where $i(x_0,f)=1$ can be conjugated to one where the fixed point is not simple.

\begin{proof}[Proof of Corollary \ref{cor:1}]
Assume $x_0$ is an attracting fixed point of the uniformly quasiregular mapping $f$ with $i(x_0,f) = 1$. It was shown in \cite{HM} that in a neighborhood $U$ of $x_0$, there exists a quasiconformal map $L$ such that $L \circ f =  T\circ L$ and $L(x_0)=0$, where $T$ is the map $x \mapsto x/2$. In particular, 
\begin{equation}
\label{eq:cor1a}
L \circ f \circ L^{-1} = T.
\end{equation}
Next, by Theorem \ref{thm:main}, we can conjugate $T$ by the quasiconformal map $F$ of Proposition \ref{prop:1} to obtain the quasiconformal map $H$ of Theorem \ref{thm:main}. In other words, 
\begin{equation}
\label{eq:cor1b}
H = F^{-1} \circ T \circ F.
\end{equation}

Combining \eqref{eq:cor1a} and \eqref{eq:cor1b}, we, locally near $0$, have the equation
\[ H = F^{-1} \circ L \circ f \circ L^{-1} \circ F .\]
We want to extend this to all of $\R^d$.
To that end, let $G: U \to \R^d$ be the quasiconformal mapping defined by $G = F^{-1}\circ L$.  
Choose $R>0$ sufficiently large that $V = B(x_0,R) \setminus \overline{U}$ is a ring domain.
Define a map $g: \R^d \to \R^d$ by
\[g(x) = 
\left \{
\begin{array}{ll}

G(x), & x \in U\\
x, & |x| > R\\
A(x), & x \in \overline{V}\\

\end{array}
\right .
\]
where $A$ is a quasiconformal mapping obtained from Sullivan's Annulus Theorem (see \cite{TV}).  
Then $g\circ f \circ g^{-1}$ is a uniformly quasiregular mapping which agrees with $H$ near $0$ and hence is not simple at $0$.

The case where $x_0$ is repelling follows similarly by observing that a local inverse to $f$ has $x_0$ as an attracting fixed point.

\end{proof}

\end{document}